% CRWmono_final.tex - 7/17/2013. Deleted large sections of commented out lines to clean up the .tex file before sending it to the publishers. 

\documentclass[reqno,letterpaper,11pt]{amsart}

\usepackage{graphicx,color,epsfig}
\usepackage{amsfonts,amsmath,amssymb,amsthm}
\usepackage{hyperref}
\usepackage{comment}

\setlength{\textheight}{9in}
\setlength{\textwidth}{6.5in}
\setlength{\oddsidemargin}{-2pt}
\setlength{\evensidemargin}{-2pt}
\setlength{\topmargin}{-.5in}
\setlength{\headsep}{24pt}
\setlength{\parskip}{0.15in}
%\linespread{1.5}

\setlength{\textheight}{9in}

\raggedbottom
%\flushbottom

\newtheorem{thm}{Theorem}[section]

\newtheorem{lem}[thm]{Lemma}
 							%Lemmas in the appendix.
\newtheorem{cor}[thm]{Corollary}

\theoremstyle{remark}
\newtheorem{rem}[thm]{Remark}

\newtheorem{ex}{Example}[section]

\theoremstyle{definition}
\newtheorem{defn}{Definition}

\newcommand{\ra}{\rightarrow}

     % For Complex numbers
     % For Natural numbers
     % For Rational numbers
     % For Real numbers
\newcommand{\Z}{\mathbb Z}     % For Integers
	       % For non-negative Integers

\renewcommand{\d}{\delta}

\newcommand{\e}{\varepsilon}

\newcommand{\s}{\sigma}

%\renewcommand{\P}{\mathbb{P}}   %Annealed Probability

%\newcommand{\E}{\mathbb{E}}   %Annealed Expectation

   %Annealed measure on EVPOVP
  %environment for a ``success''

  % Floor function
\newcommand{\ind}[1]{ \mathbf{1}_{ \{ #1 \} } } % Indicator functions

\newcommand{\be}{\begin{equation}}
\newcommand{\ee}{\end{equation}}

          % Change enumerate to use roman numerals. 

%%%%%%%%%%%%%%%%%%%%%%%%%%%%%%%%%%%%%%%%%%%%%%%%%%%%%%%%
% These are the important shortcut/formatting definitions

              % Shortcut for \omega
        % Annealed Probability
          % Annealed Expectation
       % Limiting velocity
          % Usually makes i.i.d. look better in text
\newcommand{\Pv}{\mathbf{P}}		% Probability distribution on cookie environments

		% Notation for the state space of the Markov chain
\newcommand{\p}{\vec{p}}
\newcommand{\q}{\vec{q}}

\newcommand{\cleq}{\preccurlyeq}		% New definitions for the curly weak order relation

\begin{document}

%opening
\title[Monotonicity in excited random walks]{Strict monotonicity properties in one-dimensional excited random walks}
\author{Jonathon Peterson}
\address{Jonathon Peterson \\  Purdue University \\ Department of Mathematics \\ 150 N University Street \\ West Lafayette, IN  47907 \\ USA}
%\curraddr{Purdue University \\ Department of Mathematics \\ 150 N University Street \\ West Lafayette, IN  47907 \\ USA}
\email{peterson@math.purdue.edu}
\urladdr{http://www.math.purdue.edu/~peterson}

\subjclass[2000]{Primary: 60K35}
% 60K35 - interacting random processes
% 05C80 - random graphs
% 60K37 - processes in random environments
% 60J80 - branching processes (Galton-Watson, birth-death chains, ...)
% 82B26 - Phase transitions (general) 
\keywords{Excited random walk}

\date{\today}

\begin{abstract}
 We consider excited random walks with $M$ ``cookies'' where the $i$-th cookie at each site has strength $p_i$. There are certain natural monotonicity results that are known for the excited random walk under some partial orderings of the cookie environments. For instance the limiting speed $\lim_{n\ra\infty} X_n/n = v(p_1,p_2,\ldots, p_M)$ is increasing in each $p_j$. We improve these monotonicity results to be strictly monotone under a partial ordering of cookie environments introduced by Holmes and Salisbury. While the self-interacting nature of the excited random walk makes a direct coupling proof difficult, we show that there is a very natural coupling of the associated branching process from which the monotonicity results follow.  
\end{abstract}

\maketitle

\section{Introduction}
An excited random walk is an interacting random walk where the transition probabilities depend on the number of prior visits to the current site. In this paper we will be interested in the case of an excited random walk on $\Z$ with a deterministic ``cookie environment'' with finitely many cookies at each site. The model can be described in the following way. Let $M$ be a positive integer, and let $\vec{p} = (p_1,p_2,\ldots, p_M) \in (0,1)^M$. 
Then, an excited random walk $\{X_n\}_{n\geq 0}$ is a stochastic process with law $P_{\p}$ given by $P_{\p} (X_0 = 0) = 1$ and 
%Then, an excited random walk $\{X_n\}_{n\geq 0}$ may be defined by $P_{\p} (X_0 = 0) = 1$ and 
\begin{align*}
 P_{\p}(X_{n+1} = X_n + 1 \, | \, \mathcal{F}_n ) 
& = 1 - P_{\p}(X_{n+1} = X_n - 1 \, | \, \mathcal{F}_n ) \\
&=
\begin{cases}
 p_j & \text{if } \#\{k\leq n: X_k = X_n \} = j \\
 1/2 & \text{if } \#\{k\leq n: X_k = X_n \} > M. 
\end{cases}
\end{align*}
with $\mathcal{F}_n = \s(X_k, \, k\leq n)$.
%That is, on the $j$-th visit to any site with $j\leq M$ the excited random walk moves to the right with probability $p_j$ and to the left with probability $1-p_j$, and on the $j$-th return to a site with $j>M$ the excited random walk moves like a simple symmetric random walk. 
The following ``cookie'' interpretation of excited random walks was originally given by Zerner \cite{zMERW}. One imagines that there is a stack of $M$ cookies at each site $x \in \Z$. Then, upon the $j$-th visit to $x$ the random walker eats the $j$-th cookie at that site which ``excites'' the random walker and causes the next step to $x+1$ with probability $p_j$ and to $x-1$ with probability $1-p_j$. After $M$ visits to $x$ there will be no cookies left at the site and so upon all subsequent returns to $x$ the random walker moves like a simple symmetric random walk. The sequence $\vec{p} = (p_1,p_2,\ldots p_M)$ is therefore called the \emph{cookie environment} for the excited random walk.

\begin{rem}
 In the most general model for excited random walks the cookie environment can be random, non-spatially homogeneous, and with infinitely many cookies per site \cite{zMERW,kzPNERW}. However, for the purposes of this paper we will consider only the simpler setup of deterministic cookie environments with finitely many cookies per site. 
\end{rem}
 
Excited random walks in the above setup have been studied extensively, and there is an explicit criterion for recurrence/transience as well as ballisticity that depends only on the total drift of cookies at each site:
 \be\label{ddef}
\d = \d(\vec{p}) = \sum_{j=1}^M (2 p_j - 1). 
\ee
In particular, the following is known. 
\begin{thm}[Theorems 1 and 2 in \cite{kzPNERW}]\label{rtspeed}
Let $X_n$ be an excited random walk with cookie environment $\vec{p} = (p_1, p_2,\ldots, p_M)$ with $p_j \in (0,1)$ for all $j =1,2,\ldots M$. 
\begin{enumerate}
 \item $P_{\p}( \lim_{n\ra\infty} X_n = +\infty) = 1$ if and only if $\d> 1$ \label{transright}
 \item $P_{\p}( \lim_{n\ra\infty} X_n = -\infty) = 1$ if and only if $\d< -1$
 \item $P_{\p}( \liminf_{n\ra\infty} X_n = -\infty, \,  \limsup_{n\ra\infty} X_n = \infty) = 1$ if and only if $\d\in [-1,1]$
 \item There exists a deterministic constant $v(\vec{p})$ such that 
\[
 \lim_{n\ra\infty} \frac{X_n}{n} = v(\vec{p}), \quad P_{\p}\text{-a.s.}
\]
Moreover, $v(\vec{p}) = 0$ if and only if $\d \in [-2,2]$. 
\end{enumerate}
\end{thm}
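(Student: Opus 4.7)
My plan is to reduce the theorem to analyzing an \emph{associated branching process} (BP) $\{Z_k\}_{k\ge 0}$ that encodes the number of backward edge-traversals of the walk, following the strategy of Kesten--Kozlov--Spitzer for RWRE as adapted to the excited setting (e.g.\ by Basdevant--Singh). Fix $n \geq 1$ and run the walk from $0$ until $T_n = \inf\{m : X_m = n\}$. For $0 \leq k \leq n-1$, let $V_k^n$ be the number of down-crossings of the edge $\{k, k+1\}$ before time $T_n$; then $V_{n-1}^n = 0$, and $(V_k^n)$ is a time-homogeneous Markov chain when read from $k = n-1$ down to $k = 0$, with transition law independent of $n$. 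The transition is obtained by a conservation argument: conditional on $V_{k+1}^n = j$, site $k+1$ is visited $V_k^n + j + 1$ times in total, exactly $j+1$ of which end in a rightward departure, and since the $m$-th departure from $k+1$ is rightward with probability $p_m$ (for $m \leq M$) or $1/2$ (for $m > M$), $V_k^n$ is distributed as the number of failures before the $(j+1)$-st success in this modified Bernoulli sequence.

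A direct moment computation gives, for $j \geq M - 1$,
\[
\E[V_k^n \mid V_{k+1}^n = j] = j + 1 - \d,
\]
so the reversed chain $\{Z_k\}$ on $\Zp$ (started at $0$) has per-step drift exactly $1 - \d$ once it is above $M-1$. The classical dictionary then reads: $X_n \ra +\infty$ a.s.\ iff $\{Z_k\}$ is positive recurrent, and on that event $v(\p) > 0$ iff the stationary law of $\{Z_k\}$ has finite first moment. Part (i) follows from a Foster--Lyapunov analysis based on the above drift: positive recurrence holds iff $\d > 1$. The reflection symmetry $x \mapsto -x$, $p_j \mapsto 1-p_j$ yields (ii), and (iii) is then a zero--one argument eliminating all long-term behaviors other than oscillation when $\d \in [-1,1]$. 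For (iv), on $\{X_n \ra +\infty\}$ the identity $T_n = n + 2\sum_{k=0}^{n-1} V_k^n$ together with an ergodic theorem for the backward chain gives $T_n/n \ra 1 + 2\,\E[Z_\infty]$, so $v(\p) > 0$ iff $\E[Z_\infty] < \infty$.

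I anticipate two main technical obstacles. First, the boundary cases $|\d| = 1$: the BP has zero drift there and one needs a second-moment computation showing $\{Z_k\}$ is only null-recurrent (so the walk is recurrent, not transient), a delicate step since the first-moment drift is indifferent. Second, the sharp speed threshold $|\d| = 2$ in (iv): distinguishing positive speed ($\d > 2$) from zero speed with transience ($\d \in (1, 2]$) requires showing the stationary law of $\{Z_k\}$ has a power-law tail whose exponent is determined by $\d$, via testing with $f(j) = j^\alpha$ and analyzing $\E[f(Z_{k+1}) - f(Z_k) \mid Z_k = j] \sim c_\alpha j^{\alpha - 1}$ for an explicit $c_\alpha$; integrability of $j$ against the stationary law is equivalent to $\d > 2$, with the symmetric argument covering $\d < -2$. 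The a.s.\ convergence $X_n/n \ra v(\p)$ then follows from the hitting-time representation, monotonicity of $T_n$, and a standard renewal argument.
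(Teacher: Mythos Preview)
The paper does not itself prove this theorem; it is quoted without proof as Theorems~1 and~2 of Kosygina--Zerner \cite{kzPNERW}, so there is no in-paper argument to compare your proposal against. That said, your outline is broadly faithful to the methods actually used in \cite{kzPNERW} and \cite{bsCRWspeed}, and the backward branching process you describe is exactly the chain $\{Z_i\}$ that the present paper sets up in Section~\ref{sec:BPWM} (and uses, via Lemma~\ref{vform}, precisely through the identity $T_n/n \to 1 + 2\,E^\infty[Z_0]$ that you arrive at).

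Two small remarks on accuracy. First, your identity $T_n = n + 2\sum_{k=0}^{n-1} V_k^n$ omits down-crossings originating at sites $x\le 0$; the full identity is $T_n = n + 2\sum_{x\le n} D_x^n$ (equation~\eqref{TDrep} here), though in the transient-to-the-right regime the discrepancy is almost surely bounded and hence irrelevant for the law of large numbers. Second, a Foster--Lyapunov drift argument readily gives positive recurrence of $\{Z_k\}$ when $\delta>1$, but the converse---that the chain fails to be positive recurrent when $\delta\le 1$, and in particular at the critical value $\delta=1$---does not follow from the first-moment drift being nonnegative; the cited papers handle this and the tail-exponent computation for the $\delta=2$ threshold via Lamperti-type criteria, which is somewhat more involved than the second-moment test you sketch, though your identification of these as the two genuine technical obstacles is correct.
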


Our aim will be to study some natural monotonicity properties of some quantities defined in terms of the excited random walk. To this end, first consider the natural partial ordering of cookie environments given by 
\be\label{natorder}
 \vec{p} \leq \vec{q} \quad\text{if} \quad p_i \leq q_i \text{ for all } i=1,2,\ldots M. 
\ee
%We will define a strict inequality in the above partial order by $ \vec{p} < \vec{q}$ if $ \vec{p} \leq \vec{q}$ and $\vec{p}\neq \vec{q}$. 
%\[
% \vec{p} < \vec{q} \quad\text{if}\quad  \vec{p} \leq \vec{q} \text{ and } \vec{p}\neq \vec{q}. 
%\]
The following monotonicity results follow from slightly more general statements in \cite{zMERW}.
\begin{thm}[Theorem 16 in \cite{zMERW}]\label{zTransmono}
 If $\vec{p} \leq \vec{q}$, then $P_{\p}( X_n > 0, \, \forall n>0) \leq P_{\q}( X_n > 0, \, \forall n>0)$. 
\end{thm}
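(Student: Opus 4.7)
First, I dispose of the trivial case: if $\delta(\vec p)\leq 1$ then $P_{\vec p}(X_n>0\,\forall n>0)=0$ by Theorem~\ref{rtspeed}, so the inequality is automatic. Assume $\delta(\vec p)>1$, in which case $\vec p\leq\vec q$ gives $\delta(\vec q)>1$ as well and both walks are a.s.\ transient to $+\infty$. For $x\geq 0$ define
\[
V_x:=\#\{n\geq 0:X_n=x+1,\,X_{n+1}=x\},
\]
the number of leftward crossings of the edge $\{x,x+1\}$. Writing $A^{\vec r}:=\{X_n^{\vec r}>0\,\forall n>0\}$, one has $A^{\vec r}=\{X_1^{\vec r}=1\}\cap\{V_0^{\vec r}=0\}$: on $A^{\vec r}$ the walk must first step to $1$ and never jump back to $0$, and conversely these two conditions together keep the walk inside $\{1,2,3,\ldots\}$ forever. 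My plan is to couple the two walks on a common probability space via an \iid family of uniforms $\{U_{x,k}\}$ on $(0,1)$, letting the $\vec r$-walk, on its $k$-th visit to $x$, jump right iff $U_{x,k}\leq r_k$ (extending $r_k:=1/2$ for $k>M$). Since $p_1\leq q_1$, this immediately gives $\{X_1^{\vec p}=1\}\subseteq\{X_1^{\vec q}=1\}$; what remains is to show $\{V_0^{\vec p}=0\}\subseteq\{V_0^{\vec q}=0\}$, which I obtain from a monotonicity of the backwards branching chain of left-crossings.

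The key structural observation is that $(V_x)_{x\geq 0}$ is a backwards Markov chain. Since the walk crosses $\{x,x+1\}$ one more time rightward than leftward (because it starts at $0$ and escapes to $+\infty$), it visits $x+1$ exactly $V_x+V_{x+1}+1$ times, of which $V_{x+1}+1$ end in a right-jump and $V_x$ in a left-jump, with the final visit necessarily a right-jump (otherwise the walk would be forced to return). Because the decision at the $k$-th visit to any site is an independent Bernoulli trial with success probability $p_k$ for $k\leq M$ and $\tfrac12$ otherwise, $V_x$ is precisely the number of failures preceding the $(V_{x+1}+1)$-th success in the \iid flip sequence at site $x+1$ with these parameters. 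Under the common-uniforms coupling, the $k$-th flip at $x+1$ is a success under $\vec r$ iff $U_{x+1,k}\leq r_k$; since $p_k\leq q_k$, every $\vec p$-success is simultaneously a $\vec q$-success.

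Finally, I prove $V_x^{\vec q}\leq V_x^{\vec p}$ for all $x\geq 0$ by backwards induction, with base case $V_x^{\vec p}=V_x^{\vec q}=0$ for all sufficiently large $x$ (true a.s.\ by transience). Let $T_r$ be the flip at which the $(V_{x+1}^{\vec r}+1)$-th $\vec r$-success appears, so that $V_x^{\vec r}=T_r-(V_{x+1}^{\vec r}+1)$. Because $\vec q$-successes dominate $\vec p$-successes trial by trial and $V_{x+1}^{\vec q}+1\leq V_{x+1}^{\vec p}+1$ by the inductive hypothesis, by trial $T_p$ the $\vec q$-chain has already accumulated at least $V_{x+1}^{\vec p}+1\geq V_{x+1}^{\vec q}+1$ successes, giving $T_q\leq T_p$; and the $V_{x+1}^{\vec p}-V_{x+1}^{\vec q}$ extra $\vec p$-successes still needed beyond the $\vec q$-target must lie in the window $(T_q,T_p]$, forcing $T_p-T_q\geq V_{x+1}^{\vec p}-V_{x+1}^{\vec q}$. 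Subtracting gives $V_x^{\vec p}-V_x^{\vec q}=(T_p-T_q)-(V_{x+1}^{\vec p}-V_{x+1}^{\vec q})\geq 0$, completing the induction. Applied at $x=0$ this yields $\{V_0^{\vec p}=0\}\subseteq\{V_0^{\vec q}=0\}$, which combined with the earlier inclusion for the first step gives $A^{\vec p}\subseteq A^{\vec q}$ almost surely and hence the stated probability inequality. The main conceptual hurdle is justifying that the common-uniforms coupling yields pathwise (not merely distributional) monotonicity of the two branching chains; the elementary counting inequality $T_p-T_q\geq V_{x+1}^{\vec p}-V_{x+1}^{\vec q}$ is exactly what makes this work.
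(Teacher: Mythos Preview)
Your inductive step is correct, but the base case is wrong as stated. You claim $V_x^{\vec p}=V_x^{\vec q}=0$ for all sufficiently large $x$ ``by transience,'' yet transience only gives $V_x<\infty$ for each $x$, not $V_x=0$ eventually. Indeed $\sum_{x\geq 0}V_x$ equals the total number of left steps of the walk, which is a.s.\ infinite since $X_n/n\to v<1$; the excited random walk is never eventually monotone.

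The gap is easily repaired. You only need $V_x^{\vec q}=0$ for \emph{infinitely many} $x$: these are precisely the cut points $x+1$ at which the $\vec q$-walk, after first arrival, never returns to $x$, and when $\d(\vec q)>1$ such points occur i.o.\ a.s.\ (this is the standard regeneration structure for transient excited random walks, and is equivalent to $P_{\vec q}(X_n>0,\,\forall n>0)>0$). At any such $x$ one has $V_x^{\vec p}\geq 0=V_x^{\vec q}$, and your inductive step then gives $V_y^{\vec p}\geq V_y^{\vec q}$ for all $y\leq x$; letting $x\to\infty$ along cut points covers every $y\geq 0$, in particular $y=0$.

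For comparison, the paper does not itself prove Theorem~\ref{zTransmono} (it is quoted from Zerner), but its proof of the strict analogue, Theorem~\ref{Transmono}, proceeds through the \emph{forward} branching process of Section~\ref{sec:BPWM} and the identity $P_{\vec p}(X_n>0,\,\forall n>0)=p_1 P_{\vec p}(\sigma_V=\infty)$ of Corollary~\ref{Transform}. There the coupling yields $V_i\leq V_i'$ by forward induction from the deterministic start $V_0=V_0'=1$, so no anchoring issue arises. Your route via the pathwise backward recursion of left-crossings is genuinely different and arguably more direct (it bypasses Corollary~\ref{Transform}), but the price is that the backward induction must be started correctly, as above.
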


\begin{thm}[Theorem 17 in \cite{zMERW}]\label{zSpeedmono}
 If $\vec{p} \leq \vec{q}$, then $v(\p) \leq v(\q)$.
\end{thm}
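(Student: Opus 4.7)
The plan is to rely on the branching-process representation of hitting times, since a direct pathwise coupling of the two walks is obstructed by self-interaction: two walks with distinct environments visit sites in different orders and so deplete their cookie stacks inconsistently. First a reduction: by Theorem~\ref{rtspeed}(iv) the sign of $v(\p)$ is determined by whether $\d(\p)$ exceeds $2$, lies in $[-2,2]$, or is below $-2$, and since $\p \le \q$ componentwise forces $\d(\p) \le \d(\q)$, the only nontrivial case is $\d(\p),\d(\q) > 2$ (and, by reflection symmetry, both $<-2$). I may therefore assume both walks are ballistic to the right.

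\textbf{Branching-process setup.} Let $T_n = \inf\{m : X_m = n\}$ and set $V_k^n := \#\{m < T_n : X_m = k+1,\, X_{m+1} = k\}$ for $0 \le k < n$. A standard crossing identity gives
\[
T_n = n + 2 \sum_{k=0}^{n-1} V_k^n,
\]
and inspection of the walk site by site shows that the reversed sequence $(V_{n-1-j}^n)_{j\ge 0}$ is a Markov chain: conditional on $V_k^n = z$, the value of $V_{k-1}^n$ equals the number of failures before the $(z+1)$-st success in a sequence of independent Bernoulli trials with success probabilities $p_1,\ldots,p_M$ for the first $M$ trials and $1/2$ thereafter. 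Combined with the a.s.\ limit $T_n/n \to 1/v(\p)$ in the ballistic regime, this represents $1/v(\p)$ as $1+2$ times an ergodic average of the branching process.

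\textbf{Coupling.} I introduce a single array of uniforms $(U_{k,\ell})$, indexed by site $k$ and visit index $\ell$, and run both reverse chains from the common starting value $V_{n-1}^n = 0$. At site $k$ the $\ell$th visit is declared a success in the $\p$-chain iff $U_{k,\ell}$ is below the relevant threshold ($p_\ell$ for $\ell\le M$, $1/2$ otherwise), and analogously in the $\q$-chain. Since $p_j \le q_j$, every $\p$-success is a $\q$-success, so at every trial index the cumulative failure count in $\p$ pathwise dominates that in $\q$. A short stopping-time argument (the $\p$-chain needs $V_k^{n,\p}+1 \ge V_k^{n,\q}+1$ successes, so it stops no earlier than the $\q$-chain) then gives $V_{k-1}^{n,\p} \ge V_{k-1}^{n,\q}$, and the domination propagates inductively down to $k=0$. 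Taking expectations and $n\to\infty$ yields $1/v(\p) \ge 1/v(\q)$, i.e.\ $v(\p) \le v(\q)$.

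\textbf{Main obstacle.} The substantive technical point is justifying the branching-process Markov structure, namely that visits to any given site deplete its cookies in the canonical order and that the offspring distribution depends only on the current generation size and on the cookies at that site (and not on any further features of the ERW path). Once this structural lemma is in place the coupling step is elementary and the monotonicity follows.
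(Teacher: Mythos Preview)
Your approach is correct and is essentially the method the paper uses to prove the \emph{strict} monotonicity Theorem~\ref{Speedmono}: couple the backward branching processes through the Bernoulli trials at each site, propagate the pathwise domination $Z_i \ge Z_i'$ by induction, and convert this into an inequality of speeds via Lemma~\ref{vform} and the ergodic theorem. Note, however, that the paper does not give its own proof of Theorem~\ref{zSpeedmono}; it is simply quoted from Zerner. What you have written is therefore a specialization of the paper's proof of Theorem~\ref{Speedmono} to the componentwise ordering $\p \le \q$ (where your simpler uniform-threshold coupling suffices in place of the abstract coupling coming from the definition of $\cleq$), rather than a reproduction of anything the paper does for Theorem~\ref{zSpeedmono} specifically.

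One minor correction: your crossing identity $T_n = n + 2\sum_{k=0}^{n-1} V_k^n$ is not exact, since the walk may visit sites $x \le 0$ before time $T_n$; compare \eqref{TDrep}, where the sum runs over all $x \le n$. The paper handles this by passing to $\widetilde T_n = \#\{k < T_n : X_k \ge 0\}$, or equivalently by observing that in the transient-to-the-right regime the total time spent at nonpositive sites is a.s.\ finite and hence $o(n)$. This does not affect your conclusion.
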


\begin{rem}
In \cite{zMERW}, Zerner assumed that all cookies induced a positive drift. That is $p_j\geq 1/2$ for all $j$. However, the proofs of Theorems \ref{zTransmono} and \ref{zSpeedmono} do not use this assumption.  
\end{rem}

Recently, Holmes and Salisbury introduced a weaker partial ordering on cookie environments than \eqref{natorder} which generalizes Zerner's monotonicity results above \cite{hsCombinatorial}. 
%We will write $\p \cleq \q$ if 

\begin{defn}\label{coupledef}
 We will write $\p \cleq \q$ if there exists a coupling $\Pv$ of  $(\mathbf{Y}, \mathbf{Z})$ with $\mathbf{Y} = (Y_1,Y_2,\ldots,Y_M)$ and $\mathbf{Z} = (Z_1, Z_2,\ldots,Z_M)$ are such that 
\begin{itemize}
 \item $\{Y_1,Y_2,\ldots, Y_M\}$ are independent Bernoulli random variables with $Y_j \sim \text{Ber}(p_j)$. 
 \item $\{Z_1,Z_2,\ldots, Z_M\}$ are independent Bernoulli random variables with $Z_j \sim \text{Ber}(q_j)$. 
 \item $\Pv\left( \sum_{j=1}^m Y_j \leq \sum_{j=1}^m Z_j \right) = 1$ for every $m=1,2,\ldots M$.
\end{itemize}
Moreover, we will write $\p \prec \q$ if in the above coupling we have $\Pv\left( \sum_{j=1}^m Y_j < \sum_{j=1}^m Z_j \right) > 0$ for some $m \leq M$. 
\end{defn}

\begin{rem}
 For any fixed $\p$ and $\q$, it is not immediately obvious how to check the relation $\p \cleq \q$ using the definition above. However, checking for the existence of a coupling as required in the definition is simply an algebraic (but somewhat tedious) computation. 
For instance, when $M=2$ it can be shown that 
%$\p = (p_1,p_2) \cleq (q_1,q_2) = \q$ if and only if $p_1 \leq q_1$, $p_1+(1-p_1)p_2 \leq q_1+(1-q_1)q_2$, and $p_1 p_2 \leq q_1 q_2$. 
\begin{equation}\label{wocrit}
 \p = (p_1,p_2) \cleq (q_1,q_2) = \q 
\quad \iff \quad 
\begin{cases}
 p_1 \leq q_1 \\
 p_1 p_2 \leq q_1 q_2 \\
 (1-p_1)(1-p_2) \geq (1-q_1)(1-q_2). 
\end{cases}
\end{equation}
Obviously, the conditions on the right above are necessary since any coupling as in Definition \ref{coupledef} must have $\Pv(Y_1 = 1) \leq \Pv(Z_1 = 1)$, $\Pv(Y_1+Y_2 = 2) \leq \Pv(Z_1+Z_2=2)$, and $\Pv(Y_1+Y_2=0) \geq \Pv(Z_1+Z_2=0)$.
To see that the conditions are also sufficient one needs to construct a coupling as in Definition \ref{coupledef}. 
A coupling is easy to construct if one also has $p_2 \leq q_2$ since one can independently couple $Y_1 \leq Z_1$ and $Y_2\leq Z_2$. On the other hand, if $p_2 > q_2$ then one can use the following explicit coupling. 
\[
\begin{array}{|c|c|c|}
\hline
 \mathbf{y} & \mathbf{z} & \Pv( \mathbf{Y}=\mathbf{y}, \, \mathbf{Z}=\mathbf{z} ) \\
\hline
 (0,0) & (0,0) & (1-q_1)(1-q_2) \\
 (0,0) & (1,0) & (1-p_1)(1-p_2) - (1-q_1)(1-q_2) \\
 (1,0) & (1,0) & p_1(1-p_2) \\
 (0,1) & (1,0) & p_2 - q_2 \\
 (0,1) & (0,1) & (1-q_1)q_2 \\
 (0,1) & (1,1) & q_1 q_2-p_1 p_2 \\
 (1,1) & (1,1) & p_1 p_2 \\
\hline
\end{array}
\]
\end{rem}

\begin{ex}
 It was noted in \cite{hsCombinatorial} that if one obtains $\q$ from $\p$ by re-ordering the coordinates of $\p$ to be non-decreasing, then $\p \cleq \q$. This can be verified by a series of swaps of coordinates since $(p_1,p_2) \cleq (p_2,p_1)$ if $p_1\leq p_2$. Similarly, using the criterion \eqref{wocrit} one can show that 
\[
% (0.8,0.85,0.9) \prec 
(0.9,0.85,0.8) \prec (0.9,0.88,0.78) 
%\prec (0.91,0.88,0.772)
\prec (0.92,0.88,0.77).
\]
\end{ex}

\begin{rem}\label{ineqrem}
 Note that $\p \cleq \q$ implies that 
%$0 \leq E[ \sum_{i=1}^k (Z_i - Y_i)] = \sum_{i=1}^k q_i - \sum_{i=1}^k p_i$ for all $k=1,2,\ldots,M$. 
\begin{equation}\label{partialsums}
 0 \leq E[ \sum_{i=1}^k (Z_i - Y_i)] = \sum_{i=1}^k q_i - \sum_{i=1}^k p_i, \quad \forall k=1,2,\ldots,M. 
\end{equation}
In particular, $\p \cleq \q$ implies that $ \d(\p) \leq \d(\q)$. 
%We will use this at several points int the proofs of our main results. 
%However, as the above example shows, it is possible to have $\p \prec \q$ and $\d(\p) = \d(\q)$. 
However, condition \eqref{partialsums} is not equivalent to $\p \cleq \q$ as can be seen by the example $\p = (0.5,0.5)$ and $\q = (0.5+\e,0.5-\e)$ for any $\e\in (0,0.5)$.
\end{rem}

Our goal is to prove the following strict monotonicity results 
%which strengthens some weak monotonicity results in \cite{hsCombinatorial}. 
for excited random walks. 

\begin{thm}\label{Transmono}
 If $\vec{p} \prec \vec{q}$ and $\d(\p) > 1$, then 
\[
 P_{\p}( X_n > 0, \, \forall n>0) < P_{\q}( X_n > 0, \, \forall n>0).
\]
%$P_{\p}( X_n > 0, \, \forall n>0) < P_{\q}( X_n > 0, \, \forall n>0)$. 
\end{thm}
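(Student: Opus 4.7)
The plan is to use the branching-process representation of the excited random walk together with a direct coupling argument. Since $\d(\p) > 1$, Theorem \ref{rtspeed} guarantees the walk is almost surely transient to $+\infty$. The event $\{X_n > 0, \forall n > 0\}$ requires $X_1 = 1$ (probability $p_1$) and then that the walker never subsequently steps from $1$ to $0$ (the only way a walker in $\{1,2,\ldots\}$ can reach $0$). This conditional event depends only on cookies at sites $k \geq 1$, since on this event the walker never returns to site $0$ and thus never uses a cookie at site $0$ beyond the first. Writing $P_1^{\p}$ for the law of the excited random walk started at $1$ in environment $\p$, we therefore have
\[
P_{\p}(X_n > 0, \, \forall n > 0) \;=\; p_1 \cdot P_1^{\p}(X_n \geq 1, \, \forall n \geq 0).
\]

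For the walker started at $1$ (also transient to $+\infty$), let $V_k$ denote the total number of down-crossings of the edge from $k+1$ to $k$, for $k \geq 0$. A standard edge-counting identity shows that site $k+1$ is visited $V_k + V_{k+1}+1$ times, with exactly $V_k$ left-steps and $V_{k+1}+1$ right-steps, the last of which is a right-step. Consequently, conditionally on $V_{k+1} = w$, $V_k$ is distributed as the number of failures preceding the $(w+1)$-th success in an independent Bernoulli sequence $\xi_1, \xi_2, \ldots$ with $\xi_j \sim \text{Ber}(p_j)$ for $j \leq M$ and $\xi_j \sim \text{Ber}(1/2)$ for $j > M$ (corresponding to cookies at site $k+1$). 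Writing $\pi_{\p}(w) := \prod_{j=1}^{w+1} \tilde{p}_j$ with $\tilde{p}_j = p_j$ for $j \leq M$ and $\tilde{p}_j = 1/2$ otherwise, the event $\{\text{walker from }1\text{ avoids }0\}$ coincides with $\{V_0 = 0\}$, and $P_1^\p(V_0 = 0) = E[\pi_{\p}(V_1)]$.

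For monotonicity, apply the coupling $(Y_j, Z_j)_{j=1}^M$ guaranteed by $\p \prec \q$ independently at every site (with common $\text{Ber}(1/2)$ cookies for $j > M$), producing jointly defined branching processes $\{V_k^\p\}$ and $\{V_k^\q\}$. A backward induction in $k$ (first for the walk stopped at the first hitting time $T_n$ of level $n+1$, with base case $V_n^n = 0$, then passed to the limit $n \to \infty$) yields $V_k^\q \leq V_k^\p$ almost surely for every $k$: since $Z$-partial sums dominate $Y$-partial sums, if $N_Z$ and $N_Y$ denote the trial counts at which the $(V_{k+1}^\q+1)$-th $Z$-success and $(V_{k+1}^\p+1)$-th $Y$-success occur, one has $N_Z \leq N_Y$ and the number of failures in the first $N$ $Z$-trials is bounded by the number of failures in the first $N$ $Y$-trials, giving $V_k^\q \leq V_k^\p$. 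Separately, the inclusion $\{Y_1 = \cdots = Y_m = 1\} \subseteq \{Z_1 = \cdots = Z_m = 1\}$ forced by the partial-sum constraint gives $\prod_{j \leq m} p_j \leq \prod_{j \leq m} q_j$ at any single site, so $\pi_\p(w) \leq \pi_\q(w)$ for every $w$. Since $\pi_\q$ is non-increasing in $w$,
\[
\Ev[\pi_\q(V_1^\q)] \;\geq\; \Ev[\pi_\q(V_1^\p)] \;\geq\; \Ev[\pi_\p(V_1^\p)],
\]
which combined with $p_1 \leq q_1$ yields the weak monotonicity.

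For the strict inequality, $\p \prec \q$ forces $\p \neq \q$. Let $m^*$ be the smallest index with $p_{m^*} \neq q_{m^*}$; the equalities $p_j = q_j$ for $j < m^*$ together with Remark \ref{ineqrem} applied at level $m^*$ force $p_{m^*} < q_{m^*}$, and hence $\pi_\p(m^*-1) < \pi_\q(m^*-1)$. If $m^* = 1$, the strict inequality $p_1 < q_1$ combined with $E[\pi_\p(V_1)] > 0$ (positive because the walker from $1$ escapes to $+\infty$ with positive probability) immediately yields the theorem. If $m^* \geq 2$, the second inequality in the display above becomes strict provided $P_1^\p(V_1 = m^*-1) > 0$. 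The main obstacle is verifying this last positive-probability condition: one must exhibit an explicit walker trajectory from $1$ with exactly $m^*-1$ down-crossings of the edge $(1,2)$ before escaping to $+\infty$. Since every $p_j \in (0,1)$, such a trajectory — for instance, bouncing back and forth between $1$ and $2$ exactly $m^*-1$ times, then running straight to $+\infty$ through successful cookies — has positive probability, so the support condition holds and the proof is complete.
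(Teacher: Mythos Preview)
Your proof is correct but follows a genuinely different route from the paper's. The paper reduces via Corollary~\ref{Transform} to comparing $P_{\p}(\sigma_V=\infty)$ with $P_{\q}(\sigma_V=\infty)$ for the \emph{forward} branching process (successes before the $k$-th failure, started from $V_0=1$), couples the two processes so that $V_i\le V_i'$ pathwise, and then shows directly that with positive probability the $\p$-process dies while the $\q$-process survives. You instead work with the \emph{backward} process of down-crossing counts for the walk started at $1$: with $V_k$ the total number of down-crossings of the edge $(k,k+1)$, you write $P_1^{\p}(V_0=0)=E[\pi_{\p}(V_1)]$ and compare the two factors $p_1$ and $E[\pi_{\p}(V_1)]$ separately. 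The cookie coupling is the same in both arguments, but the object being coupled and the way strictness is extracted differ: the paper pinpoints a generation at which the forward process hits $0$ under $\p$ while staying positive under $\q$, whereas you combine the pointwise inequality $\pi_{\p}\le\pi_{\q}$ (coming from the nice inclusion $\{Y_1=\cdots=Y_m=1\}\subseteq\{Z_1=\cdots=Z_m=1\}$) with the stochastic ordering $V_1^{\q}\le V_1^{\p}$. Your decomposition is more explicit about where the gain occurs and does not require citing the fact from \cite{kzPNERW} that $P_{\q}(\sigma_V=\infty)>0$; the paper's argument is a bit shorter once that fact is available. One minor imprecision: the phrase ``running straight to $+\infty$ through successful cookies'' cannot be read literally (such an infinite trajectory has probability zero); what you actually need---and what suffices---is that from a fresh site the walk stays strictly to the right with positive probability, which holds because $\d(\p)>1$.
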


\begin{thm}\label{Speedmono}
 If $\vec{p} \prec \vec{q}$ then either $v(\p) = v(\q) = 0$ or $v(\p) < v(\q)$.
\end{thm}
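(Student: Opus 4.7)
\begin{pf}[Proof proposal]
The plan is to reduce to the case where both speeds are strictly positive, and then exploit the standard branching process representation of the excited random walk, coupling the two branching processes at each site via the Holmes--Salisbury coupling of the first $M$ cookies together with a common fair coin thereafter.

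\textbf{Reduction.} By Remark~\ref{ineqrem}, $\p\prec\q$ forces $\d(\p)\le\d(\q)$, and Theorem~\ref{rtspeed}(iv) says $v(\cdot)=0$ iff $\d(\cdot)\in[-2,2]$. A short case check shows that, together with the weak monotonicity $v(\p)\le v(\q)$ already known from \cite{hsCombinatorial}, strictness is automatic unless $\d(\p),\d(\q)$ are both in $(2,\infty)$ or both in $(-\infty,-2)$. The latter reduces to the former via the reflection $\p\mapsto(1-p_1,\ldots,1-p_M)$, which negates $\d$ and reverses $\cleq$, so I would assume $\d(\p),\d(\q)>2$.

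\textbf{Branching process and speed formula.} Following the standard construction (cf.\ Basdevant--Singh and \cite{kzPNERW}), let $U_k^{(n)}$ denote the number of left-steps taken from site $k$ before the hitting time $T_n=\inf\{t:X_t=n\}$. Then $\{U_{n-j}^{(n)}\}_{j\ge 0}$ is a Markov chain on $\Z_+$ started at $0$, positive recurrent under $\d>2$ with finite-mean stationary distribution $\mu_\p$, and the speed admits the representation
\[
v(\p)=\frac{1}{1+2E_{\mu_\p}[U]}.
\]
The transition is explicit: conditionally on $U_{k+1}^{(n)}=u$, $U_k^{(n)}$ is the number of zeros before the $(u+1)$-th one in an independent sequence $\xi_1,\xi_2,\ldots$ with $\xi_j\sim\mathrm{Ber}(p_j)$ for $j\le M$ and $\xi_j\sim\mathrm{Ber}(1/2)$ for $j>M$.

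\textbf{Coupled branching processes.} At each site independently, I would couple the first $M$ Bernoullis of the $\p$- and $\q$-chains by the $\cleq$-coupling producing $(Y_j,Z_j)_{j=1}^{M}$, and use identical fair coins for $j>M$. Since $\sum_{j\le m}Y_j\le\sum_{j\le m}Z_j$ holds pathwise for every $m\ge 1$, the waiting time for the $s$-th success is (pathwise) at least as large under $\p$ as under $\q$, for every $s$. Induction on $j$, starting from $U_n^{(n)}(\p)=U_n^{(n)}(\q)=0$, gives $U_{n-j}^{(n)}(\p)\ge U_{n-j}^{(n)}(\q)$ almost surely, so $\mu_\p$ stochastically dominates $\mu_\q$.

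\textbf{Strictness, the main obstacle.} The hardest step is to upgrade dominance to a strict inequality of stationary means, since $\prec$ only furnishes a positive probability gap at a single index $m^*\le M$. Let $A_\p(u):=E[U_k^{(n)}\mid U_{k+1}^{(n)}=u]$; $A_\p$ is strictly increasing in $u$, and $A_\p\ge A_\q$ pointwise. Summing
\[
\sum_{s=1}^{m^*}\Pv\Bigl({\textstyle\sum_{j\le m^*}}Y_j<s\le{\textstyle\sum_{j\le m^*}}Z_j\Bigr)=E\Bigl[\bigl({\textstyle\sum_{j\le m^*}}Z_j-{\textstyle\sum_{j\le m^*}}Y_j\bigr)_+\Bigr]>0
\]
produces a deterministic $s^*$ at which, with positive probability, the $s^*$-th success happens strictly earlier under $\q$; this gives $A_\p(u_0)>A_\q(u_0)$ for $u_0:=s^*-1$. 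The $\q$-chain is irreducible on $\Z_+$ (from state $0$ every nonnegative value is reached in one step with positive probability via the geometric-type transition), so $u_0$ lies in the support of $\mu_\q$, and
\[
E_{\mu_\p}[U]=E_{\mu_\p}[A_\p(U)]\ge E_{\mu_\q}[A_\p(U)]>E_{\mu_\q}[A_\q(U)]=E_{\mu_\q}[U]
\]
by stochastic dominance plus monotonicity of $A_\p$ (first inequality) and the pointwise strict gap at $u_0$ (second inequality). Inserting into the speed formula yields $v(\p)<v(\q)$. The delicate point is precisely this second inequality: the strict gap exists only at a single value $u_0$, and it is the recurrence of the chain that converts it into a strict difference of averages.
\end{pf}
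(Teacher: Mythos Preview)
Your proposal is correct and matches the paper's argument up through the coupling: same reduction to $\d(\p),\d(\q)>2$, same speed formula $v=1/(1+2E_\mu[Z_0])$, same site-by-site coupling of the Bernoulli streams giving the pathwise domination $Z_i\ge Z_i'$ of the backward branching processes.

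The difference is in how strictness of the stationary means is extracted. The paper works with the \emph{joint} stationary law $P_{\p,\q}^\infty$ of the coupled chain $(Z_i,Z_i')$: it argues this chain is positive recurrent, then bounds
\[
E_\p^\infty[Z_0]-E_\q^\infty[Z_0]=\lim_n \frac{1}{n}\sum_{i<n}(Z_i-Z_i')\ge P_{\p,\q}^\infty(Z_0>Z_0')\ge P_\p^\infty(Z_0=k-1)\,P_{\p,\q}(F_{1,k}>F_{1,k}')>0
\]
for a suitable $k$. You instead work only with the \emph{marginal} stationary laws and the one-step mean function $A_\p(u)=E[F_{\cdot,u+1}]$: stochastic dominance of $\mu_\p$ over $\mu_\q$ plus monotonicity of $A_\p$ gives $E_{\mu_\p}[A_\p(U)]\ge E_{\mu_\q}[A_\p(U)]$, and the strict pointwise gap $A_\p(u_0)>A_\q(u_0)$ at a state $u_0$ of positive $\mu_\q$-mass gives $E_{\mu_\q}[A_\p(U)]>E_{\mu_\q}[A_\q(U)]$; sandwiching with the stationarity identity $E_\mu[U]=E_\mu[A(U)]$ finishes. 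Your route avoids having to verify positive recurrence and irreducibility of the \emph{coupled} chain (you only need it for each marginal chain), at the cost of checking that $A_\p$ is increasing and that $E_{\mu_\q}[A_\p(U)]<\infty$ (which holds since $A_\p(u)=u+O(1)$). Both arguments ultimately pivot on the same event---a specific parent count $u_0=s^*-1$ (your notation) or $k-1$ (the paper's) at which the coupled offspring distributions differ with positive probability---but the paper reads this off pathwise from the joint stationary measure while you read it off algebraically from the mean transition function.
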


Due to the self-interacting nature of excited random walks, the natural approach to proving Theorems \ref{Transmono} and \ref{Speedmono} via a direct coupling of excited random walks fails. 
Many of the recent results for one-dimensional excited random walks use a branching processes with migration that is related to the excited random walk \cite{bsCRWspeed,bsRGCRW,kzPNERW,kmLLCRW,rrMOTERW,dkSLRERW,pLDSERW}. 
The main idea of the proofs of Theorems \ref{Transmono} and \ref{Speedmono} is that the ordering $\cleq$ on cookie environments leads to a very natural coupling of the associated branching processes with migration. 

Holmes and Salisbury introduced the partial ordering $\cleq$ in \cite{hsCombinatorial} as a way to use couplings to obtain monotonicity results on a wide variety of self-interacting random walks, including excited random walks. 
A consequence of \cite[Theorems 1.3 and 5.2]{hsCombinatorial} is that the weak monotonicity results in Theorems \ref{zTransmono} and \ref{zSpeedmono} above extend to the partial ordering $\cleq$ on cookie environments.
However, their results do not imply the strict monotonicity in Theorems \ref{Transmono} and \ref{Speedmono}.   
The couplings given in \cite{hsCombinatorial} are somewhat difficult to work with using the random walks directly,
and the main novelty in the present paper is the observation that the partial ordering $\p \cleq \q$ gives a very natural coupling of the associated branching processes with migration
that is easier to work with.
% and through which we can obtain 
%. A careful analysis of the coupling of the associated branching processes then allows us to obtain 
%the strict monotonicity results in Theorems \ref{Transmono} and \ref{Speedmono}.  

The remainder of this paper is structured as follows. 
A review of the associated (forward and backward) branching processes with migration is given in Section \ref{sec:BPWM}. Sections \ref{sec:TM} and \ref{sec:SM} then construct the couplings of the branching processes and show how they can be used to prove Theorems \ref{Transmono} and \ref{Speedmono}, respectively. 

\section{Forward and backward branching processes with migration} \label{sec:BPWM}

In this section we recall two branching processes with migration and explain their relation to the excited random walks. 
We will use the ``coin tossing'' construction of the branching processes given in \cite{kzPNERW}. For a fixed cookie environment $\p=(p_1,p_2,\ldots p_M)$ let $\{ \xi_{i,j} \}_{i \in \Z, \, j\geq 1}$ be a collection of independent Bernoulli random variables with 
\[
 \xi_{i,j} \sim
\begin{cases}
 \text{Bernoulli($p_j$)} & j \leq M \\
 \text{Bernoulli($1/2$)} & j > M. 
\end{cases}
\]
%We will consider $\xi_{i,j} = 1$ to represent a ``success.''
The random variables $\xi_{i,j}$ can be used to generate the path of the excited random walk. In particular, on the $j$-th visit to the site $i\in \Z$ the random walk will step right if $\xi_{i,j} = 1$ and left otherwise. 

Next we use the same Bernoulli random variables $\xi_{i,j}$ to construct the forward and backward branching processes associated with the excited random walk.

\subsection{Forward branching process}

For each fixed $i \in \Z$, we may think of the sequence $\{ \xi_{i,j} \}_{j\geq 1}$ as representing a sequence of independent trials. We say that the $j$-th trial is a \emph{success} if $\xi_{i,j} = 1$ and a \emph{failure} otherwise, and we define $S_{i,k}$ for to be the number of successes in the sequence $\{\xi_{i,j}\}_{j\geq 1}$ before the $k$-th failure. That is, 
\[
 S_{i,k} = \min \left\{ m \geq 0: \, \sum_{j=1}^{m+k} \xi_{i,j} = m \right\}. 
\]
%By convention we will let $S_{i,0} = 0$. 
(Note that this definition makes sense for $k=0$ as well with $S_{i,0} = 0$.) 
Then the forward branching process $\{V_i\}_{i\geq 0}$ is defined by 
\be\label{fbpdef}
V_0 = 1, \text{ and } V_{i+1} = S_{i+1,V_i} \text{ for } i\geq 0. 
\ee

The relevance of the forward branching process to the excited random walk can be seen by looking at the right excursions of the excited random walk. 
Let $T_0^+ = \inf \{ n\geq 1: \, X_n = 0 \}$ be the first return time of the excited random walk to the origin. 
For every $i\geq 0$ let $U_i$ be the number of times the random walk steps from $i$ to $i+1$ before time $T_0^+$. That is, 
\[
 U_i = \# \{ n < T_0^+ : \, X_n = i, \, X_{n+1} = i+1 \}. 
\]
%Then on the event $\{ X_0=0, \, X_1 = 1, \, T_0^+ < \infty \}$. 

\begin{lem}\label{TUVlem}
$U_i \leq V_i$ for all $i\geq 0$, with equality for all $i\geq 0$ on the event $\{X_1 = 1, \, T_0^+ < \infty \}$.
%$U_i = V_i$ for all $i\geq 0$ on the event $\{X_1 = 1, \, T_0^+ < \infty \}$. 
In particular, this implies that 
\be\label{TUV}
P_{\p}\left(X_1 = 1, \, T_0^+ = 2k \right) = P_{\p}\left( \sum_{i\geq 0} U_i = k \right) = p_1 P_{\p}\left( \sum_{i\geq 0} V_i = k \right), \quad \forall k\geq 1. 
\ee
\end{lem}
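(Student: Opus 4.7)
My plan is to prove the pathwise statement $U_i \leq V_i$ (with equality on $\{X_1 = 1,\, T_0^+ < \infty\}$) by induction on $i$, and then to deduce \eqref{TUV} from the pathwise identity via an independence argument. The base case is immediate: $V_0 \equiv 1$ by definition, while $U_0 = \indd{X_1 = 1}$ since the only visit to the origin during $[0, T_0^+)$ occurs at time $0$.

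The inductive step on $\{X_1 = 1,\, T_0^+ < \infty\}$ hinges on a crossing-balance observation. For each $i \geq 0$ the walker is on the same side of the edge $(i, i+1)$ at times $0$ and $T_0^+$, so the number of right steps from $i$ to $i+1$ equals the number of left steps from $i+1$ to $i$ during the excursion. Consequently the walker consumes exactly $U_i + U_{i+1}$ of the variables $\{\xi_{i+1, j}\}_{j \geq 1}$ in order, with $U_{i+1}$ successes and $U_i$ failures, and the last variable consumed must be a failure (the walker's final exit from $i+1 \geq 1$ is downward to $i$). Therefore the $U_i$-th failure in the full sequence $\{\xi_{i+1, j}\}_{j \geq 1}$ coincides with that last consumed variable and is preceded by exactly $U_{i+1}$ successes; combined with the inductive hypothesis $V_i = U_i$ this yields $V_{i+1} = S_{i+1, V_i} = S_{i+1, U_i} = U_{i+1}$.

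For the general inequality I would use the monotonicity of $k \mapsto S_{i+1, k}$ to conclude $V_{i+1} = S_{i+1, V_i} \geq S_{i+1, U_i}$, reducing the problem to showing $S_{i+1, U_i} \geq U_{i+1}$ pathwise. This is trivial on $\{X_1 = -1\}$, where $U_i \equiv 0$ for all $i \geq 0$. On $\{X_1 = 1, T_0^+ = \infty\}$ the walker is transient to $+\infty$, and the same crossing count (now with start- and end-sides of $(i, i+1)$ differing) shows that the walker consumes only $U_i - 1$ failures at site $i+1$ over all of time; the $U_i$-th failure therefore lies strictly beyond the walker's consumption and is preceded by at least $U_{i+1}$ successes.

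Equation \eqref{TUV} then follows from two ingredients. First, $\{\sum_{i \geq 0} U_i = k\} = \{X_1 = 1, T_0^+ = 2k\}$ for $k \geq 1$: on right excursions the right and left steps balance giving $\sum U_i = T_0^+/2$, while $\sum U_i \in \{0, \infty\}$ on $\{X_1 = -1\}$ and $\{X_1 = 1, T_0^+ = \infty\}$ respectively. Second, $(V_0, V_1, \ldots)$ is measurable with respect to $\sigma(\xi_{i, j} : i \geq 1)$ and therefore independent of $\xi_{0, 1}$; on $\{\xi_{0, 1} = 1\}$ the pathwise equality $V_i = U_i$ whenever $T_0^+ < \infty$, together with the fact that the transient case forces $V_i \geq U_i \geq 1$ for all $i$ (so $\sum V_i = \infty$), gives $\{\xi_{0, 1} = 1,\, \sum_{i \geq 0} V_i = k\} = \{X_1 = 1,\, T_0^+ = 2k\}$ for $k \geq 1$, and independence produces the $p_1$ factor. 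The main technical obstacle I anticipate is the tight alignment in the inductive step: promoting the generic inequality $V_{i+1} \geq U_{i+1}$ to an equality requires simultaneously invoking both the crossing-balance count and the fact that the walker's final $\xi$-variable consumed at $i+1$ is a failure; once that alignment is in hand, the remaining case analysis and independence argument are routine.
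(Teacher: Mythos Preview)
Your proposal is correct and follows essentially the same approach as the paper: induction on $i$ using the crossing-balance at each edge for the pathwise comparison $U_i\le V_i$ (with equality on $\{X_1=1,\,T_0^+<\infty\}$), followed by the identity $\{\sum_{i\ge 0}U_i=k\}=\{X_1=1,\,\sum_{i\ge 0}V_i=k\}$ and the independence of $\{X_1=1\}=\{\xi_{0,1}=1\}$ from $\sigma(\xi_{i,j}:i\ge 1)$ to extract the factor $p_1$. Your write-up is in fact more detailed than the paper's, which outsources the inductive pathwise step to \cite{kzPNERW}; the only implicit point worth flagging is that ``transient to $+\infty$'' on $\{X_1=1,\,T_0^+=\infty\}$ is an almost-sure (not pathwise) statement, but the paper relies on the same fact.
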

%\begin{rem}
% This relation was shown previously in \cite{kzPNERW}, but we repeat the argument here for completeness. 
%\end{rem}
\begin{proof}
The first part of the lemma was proved in \cite[Section 4]{kzPNERW} and so we will only briefly explain the intuition. First note that $U_0 = 1$ if and only if $X_1 = 1$ and that $V_0 = 1$ by definition. When $X_1 = 1$ and $T_0^+<\infty$ then every jump to the right from $i\geq 1$ is matched by a jump to the left from $i+1$. If $U_i = k$ then there are $k$ jumps from $i+1$ to $i$ before $T_0^+$ and the $k$-th such jump to the left corresponds to the $k$-th failure in the sequence $\{\xi_{i+1,j}\}_{j\geq 1}$.
Therefore, it is evident that the number of jumps to the right from $i+1$ before $T_0^+$ is $U_{i+1} = S_{i+1,k}$ and so $U_i = V_i$ implies $U_{i+1} = V_{i+1}$ when $T_0^+ < \infty$.
On the other hand, if $X_1 = 1$ but $T_0^+ = \infty$ then the last jump right at every $i\geq 1$ is not matched with a corresponding jump to the left from $i+1$. Therefore, if $U_i = k$ then it may be that not all of the ``successes'' before the $k$-th ``failure'' in the sequence $\{ \xi_{i+1,j} \}_{j\geq 1}$ are used to generate jumps to the right. Thus $U_{i+1} \leq S_{i+1,k}$ and so by induction we see that $U_i \leq V_i$ on the event $\{T_0^+ = \infty \}$. 

To prove \eqref{TUV} first note that 
\be\label{UT}
 2 \sum_{i\geq 0} U_i = 
\begin{cases}
 0 & \text{on } \{X_1 = -1\} \\
 T_0^+ & \text{on } \{ X_1 = 1, \, T_0^+ < \infty\} \\
 \infty & \text{on } \{X_1 = 1, \, T_0^+ = \infty\}. 
\end{cases}
\ee
This proves the first equality in \eqref{TUV}. 
To prove the second equality in \eqref{TUV} we claim that 
\be\label{UV}
 \left\{ \sum_{i\geq 0} U_i = k \right\} = \left\{ X_1 = 1, \, \sum_{i\geq 0} V_i = k \right\}. 
\ee
Indeed, $\sum_{i\geq 0} U_i = k$ for some finite $k\geq 1$ implies that $X_1 = 1$ and $T_0^+ =2k < \infty$. Therefore, the first part of the lemma then implies that $U_i = V_i$ for all $i\geq 0$ and so $\sum_{i\geq 0} V_i = k$ as well. 
On the other hand, if $X_1 = 1$ and $\sum_{i\geq 0} V_i$ is finite then $U_0 = 1$ and it follows from the first part of the lemma that $\sum_{i\geq 0} U_i$ is finite as well. By \eqref{UT} this implies that $T_0^+$ is finite and so $\sum_{i\geq 0} U_i = \sum_{i\geq 0} V_i$ when $X_1 = 1$ and $\sum_{i\geq 0} V_i < \infty$. 
Finally, the second equality in \eqref{TUV} then follows from \eqref{UV} and the fact that the events $\{X_1 = 1\}$ and $\{\sum_{i\geq 0} V_i = k \}$ are independent since the first only depends on $\xi_{0,1}$ and the second only depends on $\{\xi_{i,j}\}_{i\geq 1, \, j\geq 1}$ (recall that $V_0 = 1$ by definition). 
\end{proof}

Let $\s_V = \inf \{ i \geq 1: V_i = 0 \}$ be the lifetime of the forward branching process $V_i$. Since $\sum_{i\geq 0} V_i < \infty$ if and only if $\s_V < \infty$, applying Lemma \ref{TUVlem} implies the following corollary. 
\begin{cor}\label{Transform}
 $P_{\p}( X_n > 0, \, \forall n > 0 ) = p_1 P_{\p}( \s_V = \infty )$. 
\end{cor}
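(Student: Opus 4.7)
The plan is to deduce the corollary directly from Lemma \ref{TUVlem} (specifically equation \eqref{TUV}) together with the observation that the event $\{X_n>0,\,\forall n>0\}$ is the same as $\{X_1=1,\,T_0^+=\infty\}$.

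First I would write
\[
P_{\p}(X_n>0,\,\forall n>0) = P_{\p}(X_1=1)-P_{\p}(X_1=1,\,T_0^+<\infty).
\]
Since $X_1=1$ iff $\xi_{0,1}=1$, and $\xi_{0,1}\sim\text{Ber}(p_1)$, the first term on the right equals $p_1$. For the second term, I would decompose by the value of $T_0^+$ (which must be even when $X_1=1$ and $T_0^+<\infty$) and apply \eqref{TUV}:
\[
P_{\p}(X_1=1,\,T_0^+<\infty)=\sum_{k\geq 1}P_{\p}(X_1=1,\,T_0^+=2k)=p_1\sum_{k\geq 1}P_{\p}\Bigl(\sum_{i\geq 0}V_i=k\Bigr)=p_1 P_{\p}\Bigl(\sum_{i\geq 0}V_i<\infty\Bigr).
\]

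Next, I would invoke the identification, noted just before the corollary, that $\sum_{i\geq 0}V_i<\infty$ iff $\s_V<\infty$ (since $V_0=1$ and the forward branching process is absorbed at $0$, the only way the total progeny can be finite is that the process dies out in finite time). Combining these two identities gives
\[
P_{\p}(X_n>0,\,\forall n>0)=p_1-p_1 P_{\p}(\s_V<\infty)=p_1 P_{\p}(\s_V=\infty),
\]
which is the claimed equality.

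There is no real obstacle here: all of the work is already contained in Lemma \ref{TUVlem}. The only small points to verify are that the event $\{T_0^+<\infty\}\cap\{X_1=1\}$ decomposes over the even values of $T_0^+$ (forced because $X_{T_0^+}=0$ and parity), and that Fubini/monotone convergence lets us interchange the sum over $k$ with the probability, which is automatic since all terms are nonnegative.
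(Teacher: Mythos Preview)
Your proof is correct and follows essentially the same route as the paper: both start from $\{X_n>0,\,\forall n>0\}=\{X_1=1,\,T_0^+=\infty\}$, use Lemma~\ref{TUVlem} to convert $P_{\p}(X_1=1,\,T_0^+<\infty)$ into $p_1 P_{\p}(\sum_{i\geq 0}V_i<\infty)$, and finish via the equivalence $\{\sum_{i\geq 0}V_i<\infty\}=\{\s_V<\infty\}$. The only cosmetic difference is that you sum explicitly over the even values of $T_0^+$ via \eqref{TUV}, whereas the paper states the resulting identity directly.
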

\begin{proof}
 Obviously, $P_{\p}( X_n > 0, \, \forall n > 0 ) = P_{\p}( X_1 = 1, \, T_0^+ = \infty )$. Then, Lemma \ref{TUVlem} implies that 
\begin{align*}
 P_{\p}( X_n > 0, \, \forall n > 0 ) &= P_{\p}(X_1 = 1) - P_{\p}( X_1 = 1, \, T_0^+ < \infty ) \\
&= p_1 - p_1 P_{\p}\left( \sum_{i\geq 0} V_i < \infty \right) = p_1 P_{\p}\left( \sum_{i\geq 0} V_i = \infty \right). 
\end{align*}
The proof is concluded by noting that $\{ \sum_{i\geq 0} V_i = \infty \} = \{ \s_V = \infty \}$ since $0$ is an absorbing state for the Markov chain $V_i$. 
\end{proof}

\subsection{Backward Branching Process}

The backward branching process that we will construct differs from the above constructed forward branching process in two ways. First, we will reverse the role of ``failures'' and ``successes.'' Secondly, there will be an extra immigrant before reproduction in each generation. To make this precise, let $F_{i,k}$ be the number of failures in the sequence $\{\xi_{i,j} \}_{j\geq 1}$ before the $k$-th success. That is, 
\[
 F_{i,k} = \min\left\{ m \geq 0: \, \sum_{j=1}^{m+k} \xi_{i,j} = k \right\}.
\]
(Note that the above definition gives $F_{i,0} = 0$.)
The backward branching process $\{Z_i\}_{i\geq 0}$ is defined by 
\[
 Z_0 = 0, \quad \text{and } Z_{i+1} = F_{i+1, \, Z_i + 1} \text{ for every } i \geq 0. 
\]

The relevance of the backward branching process to the excited random walk can be seen by examining the hitting times $T_n = \inf \{ k\geq 0: \, X_k = n \}$. 
For $n\geq 1$ and $x \leq n$ let $D_x^n$ be the number of times the random walk jumps left from $x$ to $x-1$ before time $T_n$. That is, 
\[
 D_x^n = \# \left\{ k < T_n : \, X_k = x, \, X_{k+1} = x-1 \right\}. 
\]
\begin{lem}\label{ZDlem}
 Suppose that the excited random walk is either recurrent or transient to the right (that is $\d(\p) \geq - 1$). Then for any $n\geq 1$, the sequence $(D_n^n,D_{n-1}^n,\ldots, D_1^n, D_0^n)$ has the same distribution as $(Z_0,Z_1,\ldots, Z_{n-1}, Z_n)$. 
\end{lem}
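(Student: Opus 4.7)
The plan is to verify, trajectory by trajectory, that the spatially reversed left-jump counts $(D_n^n, D_{n-1}^n, \ldots, D_0^n)$ satisfy the same recursion as the backward branching process $(Z_0, Z_1, \ldots, Z_n)$, driven by an equivalent collection of i.i.d.\ coin sequences. The assumption $\d(\p) \geq -1$ combined with Theorem \ref{rtspeed} ensures $T_n < \infty$ almost surely, so every $D_x^n$ is a well-defined random variable.

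First I would establish a conservation-of-flux identity. Let $R_x^n$ denote the number of right jumps from $x$ strictly before time $T_n$. For each interior site $x$ with $0 < x < n$, each visit before $T_n$ is entered from exactly one neighbor and exited to exactly one neighbor, which yields the balance equation $R_{x-1}^n + D_{x+1}^n = R_x^n + D_x^n$. Combined with the boundary conditions $R_{n-1}^n = 1$ (the walk enters $n$ exactly once, at time $T_n$) and $D_n^n = 0$, telescoping in $x$ gives
\[
R_{x-1}^n = 1 + D_x^n, \qquad 1 \leq x \leq n.
\]

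Next I would identify the pathwise formula for $D_{x-1}^n$ in terms of the coin sequence. On each visit to $x-1$ the walk consumes the next coin $\xi_{x-1,j}$ in order, producing a right jump if $\xi_{x-1,j}=1$ and a left jump if $\xi_{x-1,j}=0$. The crucial topological observation is that the last pre-$T_n$ jump from $x-1$ must be a right jump: otherwise the walk would land at $x-2$ with $n$ not yet reached, forcing a future return to $x-1$ and contradicting that this was the last such jump. Hence the coin prefix consumed at $x-1$ before $T_n$ ends exactly at the $(1+D_x^n)$-th one, and the number of zeros in that prefix is
\[
D_{x-1}^n = F_{x-1,\,1+D_x^n}, \qquad 1 \leq x \leq n.
\]

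Setting $W_i := D_{n-i}^n$ then gives $W_0 = 0$ and $W_{i+1} = F_{n-i-1,\,1+W_i}$, which is the same recursion as $Z_{i+1} = F_{i+1,\,1+Z_i}$ but driven by a different, spatially relabeled block of coin sequences. Since the sequences $\{\xi_{i,\cdot}\}_{j\geq 1}$ are i.i.d.\ across $i \in \Z$, the two recursions produce processes with identical joint distributions, which proves the lemma. The main obstacle is the topological argument that each final pre-$T_n$ departure from $x-1$ is a right jump; the remainder is routine conservation bookkeeping.
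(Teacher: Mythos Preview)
Your proof is correct and follows precisely the standard argument: the paper itself omits the proof of this lemma, noting only that it is well known (citing \cite{bsCRWspeed,kksStable,kzPNERW}) and that the ideas parallel those of Lemma~\ref{TUVlem}. Your flux identity $R_{x-1}^n = 1 + D_x^n$, the observation that the final pre-$T_n$ departure from each $x-1$ is to the right, and the resulting pathwise recursion $D_{x-1}^n = F_{x-1,\,1+D_x^n}$ together with the i.i.d.\ relabeling of coin columns constitute exactly the argument those references give, so there is nothing to add.
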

\begin{rem}
 This relation between the backward branching process and the excited random walk was first shown in \cite{bsCRWspeed} for deterministic cookie environments, but the general idea goes back at least to \cite{kksStable} where it was used to analyze random walks in a random environment. It should also be noted that Lemma \ref{ZDlem} also holds for excited random walks when the cookie environment is spatially i.i.d.\ \cite{kzPNERW}.
\end{rem}

We will omit the proof of Lemma \ref{ZDlem} since it is well known (see the above references) and the ideas are similar to the proof of Lemma \ref{TUVlem}. We remark that (in contrast to Lemma \ref{TUVlem}) it is not true that the two sequences are identical, but only that they have the same distribution and that this follows from the fact that the sequence of Bernoulli trials at each site $\{\xi_{i,j}\}_{i\in\Z, \, j\geq 1}$ is i.i.d.\ in $i$. 

Lemma \ref{ZDlem} can be used to relate (the distribution of) the backward branching process to the hitting times $T_n$. However, the correspondence is slightly simpler if one instead considers the amount of time the random walk spends to the right of the origin prior to hitting $n$. That is, 
\[
 \widetilde{T}_n = \# \{ k < T_n : \, X_k \geq 0 \}. 
\]
 
\begin{cor}
 Suppose that the excited random walk is either recurrent or transient to the right (that is $\d(\p) \geq -1$). Then,
\be\label{TZ}
 \widetilde{T}_n 
%= n + 2 \sum_{x=1}^n D_x^n + D_0^n 
\overset{\text{law}}{=} n + 2 \sum_{i=0}^{n-1} Z_i + Z_n
\ee
\end{cor}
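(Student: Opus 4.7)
The plan is to decompose $\widetilde{T}_n$ into visit counts at each non-negative site below $n$, convert each rightward count into a leftward count via an edge-crossing balance, and then apply Lemma \ref{ZDlem} to pass from the $D_x^n$'s to the backward branching process.

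First I would note that because $X_k < n$ for all $k < T_n$ and every visit to a site at a time $k < T_n$ is immediately followed by a step,
\[
\widetilde{T}_n \;=\; \sum_{x=0}^{n-1} N_x^n \;=\; \sum_{x=0}^{n-1}\bigl(R_x^n + D_x^n\bigr),
\]
where $N_x^n$ is the number of visits to $x$ before $T_n$ and $R_x^n$ is the number of rightward departures from $x$ before $T_n$ (defined analogously to $D_x^n$). Under the hypothesis $\d(\p) \geq -1$, $T_n$ is finite almost surely, so all of these counts are a.s.\ finite.

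Next I would apply the standard edge-crossing balance. Since the walker starts at $0$ and at time $T_n$ arrives at $n$ for the first time, the edge $(x,x+1)$ is crossed upward one more time than downward for every $x \in \{0,1,\ldots,n-1\}$; that is, $R_x^n = D_{x+1}^n + 1$. Substituting and reindexing,
\[
\widetilde{T}_n \;=\; n + \sum_{x=0}^{n-1} D_{x+1}^n + \sum_{x=0}^{n-1} D_x^n \;=\; n + D_0^n + 2\sum_{x=1}^{n-1} D_x^n + D_n^n \;=\; n + D_0^n + 2\sum_{x=1}^{n-1} D_x^n,
\]
using $D_n^n = 0$. Finally, by Lemma \ref{ZDlem} the joint law of $(D_0^n,D_1^n,\ldots,D_{n-1}^n,D_n^n)$ equals that of $(Z_n,Z_{n-1},\ldots,Z_1,Z_0)$, so reversing the index of summation yields
\[
\widetilde{T}_n \;\overset{\text{law}}{=}\; n + Z_n + 2\sum_{j=1}^{n-1} Z_j,
\]
which matches $n + 2\sum_{i=0}^{n-1} Z_i + Z_n$ since $Z_0 = 0$.

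The only real subtlety, and what I expect to be the main place where care is needed, is the bookkeeping of the boundary terms: the $+1$'s in the edge balance combine to produce the leading $n$, and the endpoints $D_0^n$ and $D_n^n$ must be tracked through the reindexing so that after the time-reversal in Lemma \ref{ZDlem} they correspond to $Z_n$ and $Z_0$ respectively (the latter vanishing by definition). Everything else is a routine balance-equation manipulation.
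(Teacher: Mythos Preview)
Your proof is correct and follows essentially the same approach as the paper: both arguments reduce $\widetilde{T}_n$ to the expression $n + 2\sum_{x=1}^{n} D_x^n + D_0^n$ via an edge-crossing balance (your identity $R_x^n = D_{x+1}^n + 1$ is exactly the paper's observation that each leftward step from $x\geq 1$ is matched with a later rightward step from $x-1$), and then invoke Lemma~\ref{ZDlem}. Your write-up is in fact a bit more explicit about the bookkeeping of the boundary terms than the paper's own proof.
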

\begin{proof}
 First note that on the event $\{ T_n < \infty \}$,
\be\label{TDrep}
 T_n = n + 2 \sum_{x\leq n} D_x^n
\ee
Indeed the $n$ counts the first step from $x$ to $x+1$ for each $x=0,1,\ldots, n-1$, and every step from $x$ to $x-1$ before $T_n$ is matched with a later corresponding step from $x-1$ to $x$. 
Similarly, by only counting the steps that begin at $x \geq 0$ we obtain that
\[
  \widetilde{T}_n = n + 2 \sum_{x=1}^n D_x^n + D_0^n.
\]
The proof is then completed by applying Lemma \ref{ZDlem} and noting that the assumption that $\d(\p) \geq -1$ implies that $T_n < \infty$ with probability one.
\end{proof}

\section{Proof of Theorem \ref{Transmono}} \label{sec:TM}

In this section we will use the forward branching process $V_i$ defined above to prove Theorem \ref{Transmono}. 

\begin{proof}[Proof of Theorem \ref{Transmono}]
 As noted in Remark \ref{ineqrem}, $\p \prec \q$ implies that $p_1 \leq q_1$. Thus, by Corollary \ref{Transform} it is enough to prove that 
\be\label{Pineq}
 \d(\p) > 1 \text{ and } \p \prec \q \quad \Longrightarrow \quad P_{\p}( \s_V = \infty ) < P_{\q}( \s_V = \infty). 
\ee
To this end, we will use the relation $\p\prec \q$ to give a coupling of the forward branching processes in the cookie environments $\p$ and $\q$, respectively. 
In particular, let $$\{(\mathbf{Y}_i, \, \mathbf{Z}_i )\}_{i \in \Z} = \{((Y_{i,1},\ldots,Y_{i,M}), (Z_{i,1},\ldots,Z_{i,M})) \}_{i\in \Z}$$ be an i.i.d.\ sequence of joint random variables with each $(\mathbf{Y}_i, \, \mathbf{Z}_i)$ having the same joint distribution as  
$(\mathbf{Y},\, \mathbf{Z})$ given in the definition of the partial ordering $\p \prec \q$. Also, let $\{ \mathbf{B}_i \}_{i\in\Z} = \{ B_{i,j} \}_{i \in \Z, \, j \geq 1}$ be an i.i.d.\ collection of Bernoulli($1/2$) random variables that is independent of $\{(\mathbf{Y}_i, \mathbf{Z}_i)\}_{i\in \Z}$. Finally, we will construct $\Xi = \{ \Xi_i \} = \{ \xi_{i,j} \}_{i\in \Z, j\geq 1}$ and $ \Xi' = \{ \Xi_i' \} = \{ \xi_{i,j}' \}_{i\in \Z, j\geq 1}$ by letting $\Xi_i = (\mathbf{Y}_i, \mathbf{B}_i)$ and $\Xi_i' = (\mathbf{Z}_i, \mathbf{B}_i)$. That is,
\[
 \xi_{i,j} = 
\begin{cases}
 Y_{i,j} & \text{if } j\leq M \\
 B_{i,j-M} & \text{if } j> M
\end{cases}
\quad\text{and}\quad
 \xi_{i,j}' = 
\begin{cases}
 Z_{i,j} & \text{if } j\leq M \\
 B_{i,j-M} & \text{if } j> M.
\end{cases}
\]
We will denote the joint distribution of $(\Xi,\Xi')$  by $P_{\p,\q}$. 
Note that this coupling has the following properties. 
\begin{itemize}
 \item The marginal distributions of $\Xi$ and $\Xi'$ are $P_{\p}$ and $P_{\q}$, respectively. 
% \item $\xi_{i,j} \sim \begin{cases} \text{Bernoulli}(p_j) & j \leq M \\ \text{Bernoulli}(1/2) & j > M \end{cases}$, and $\bar\xi_{i,j} \sim \begin{cases} \text{Bernoulli}(q_j) & j \leq M \\ \text{Bernoulli}(1/2) & j > M \end{cases}$.
 \item $\{(\Xi_i, \Xi_i')\}_{i\in \Z}$ is an i.i.d.\ sequence under $P_{\p,\q}$. 
 \item For any $i\in \Z$ and $m\geq 1$, $\sum_{j=1}^m \xi_{i,j} \leq \sum_{j=1}^m \xi_{i,j}'$, $P_{\p,\q}$\,-a.s. 
 \item There exists an $m \leq M$ such that 
% $P_{\p,\q}\left( \sum_{j=1}^m \xi_{i,j} < \sum_{j=1}^m \bar\xi_{i,j} \right) > 0$. 
\be\label{xisum}
P_{\p,\q}\left( \sum_{j=1}^m \xi_{i,j} < \sum_{j=1}^m \xi_{i,j}' \right) > 0. 
\ee
\end{itemize}

With the above construction of $\Xi$ and $\Xi'$ we can now give a coupling of the forward branching processes in cookie environments $\p$ and $\q$, respectively. Let, $S_{i,k}$ and $S_{i,k}'$ be the number of successes before the $k$-th failure in the sequences of Bernoulli trials $\{\xi_{i,j}\}_{j\geq 1}$ and $\{\xi_{i,j}'\}_{j\geq 1}$, respectively, and let
\[
 V_0 = V_0' = 1, \quad\text{and}\quad V_{i+1} = S_{i+1,V_i}, \quad  V_{i+1}' = S_{i+1,V_i'}'.
\]
%By the construction of the Bernoulli families $(\Xi,\Xi')$, it is clear that $\p \prec \q$ implies that there exists an $m\leq M$ such that $P_{\p,\q}( F_{i,m} > F_{i,m}')$. 
The above properties of the coupling $(\Xi,\Xi')$ imply that
$S_{i,k} \leq S_{i,k}'$ for all $i\in \Z$ and $k\geq 1$. Since 
\[
V_i \leq V_i' \quad \Longrightarrow \quad 
 V_{i+1} = S_{i+1,V_i} \leq S_{i+1,V_i'} \leq S_{i+1,V_i'}' = V_{i+1}', 
\]
it follows by induction that $V_i \leq V_i'$ for all $i\geq 0$. 

Let $\s_V = \inf \{ i\geq 1: V_i = 0 \}$ and $\s_V' = \inf \{ i\geq 1: V_i' = 0 \}$ be the lifetimes of the processes $V_i$ and $V_i'$, respectively. Then, since $V_i \leq V_i'$ it follows that $ \s_V \leq \s_V'$, and so to prove \eqref{Pineq} it is enough to show
\be\label{PsV}
 P_{\p,\q}(\s_V < \infty, \, \s_V' = \infty) > 0. 
\ee
To this end, note that for any $k\geq 1$, 
\begin{align}
 P_{\p,\q}(\s_V < \infty, \, \s_V' = \infty) 
&\geq P_{\p,\q}( k = V_1 \leq V_1', \, V_2 = 0 < V_2', \, \s_V' = \infty ) \nonumber \\
&\geq P_{\p,\q}( k = V_1 \leq V_1', \, V_2 = 0 < V_2') P_{\q}(\s_V = \infty)  \nonumber \\
&\geq P_{\p,\q}( V_1 = k \leq V_1', \, S_{2,k} = 0 < S_{2,k}') P_{\q}(\s_V = \infty)  \nonumber \\
&= P_{\p}( V_1 = k )P_{\p,\q}(S_{2,k} = 0 < S_{2,k}') P_{\q}(\s_V = \infty), \label{3p}
\end{align}
where in the second inequality we use the Markov property and the fact that $P_{\q}(\s_V = \infty |\, V_0 = m )$ is non-decreasing in $m$ (which can be easily seen from the construction of the process $V_i$).
It was shown in \cite{kzPNERW} that $P_{\q}(\s_V = \infty) > 0$ if and only if $\d(\q) > 1$ (in \cite{kzPNERW} this fact was then used to prove part \ref{transright} of Theorem \ref{rtspeed}). Since we assumed that $\d(\q) \geq \d(\p) > 1$, using \eqref{3p} we will be able to conclude that \eqref{PsV} holds if we can show that 
\be\label{Sineq}
 P_{\p,\q}(S_{2,k} = 0 < S_{2,k}') > 0 \quad \text{ for some } k\geq 1. 
\ee

To show \eqref{Sineq}, let 
\[
 k = \min \left\{ m\geq 1: P_{\p,\q}\left( \sum_{j=1}^m \xi_{i,j} < \sum_{j=1}^m \xi_{i,j}'  \right) > 0 \right\}.
\]
Then it must be the case that $P_{\p,\q}( \xi_{i,j} = \xi_{i,j}', \, j=1,2,\ldots k-1 ) = 1$. In particular, this implies that $0 = E[\sum_{j=1}^m (\xi_{i,j} - \xi_{i,j}') ] = \sum_{j=1}^m (p_i - q_i)$ for all $m < k$ while 
$0 > E[\sum_{j=1}^k (\xi_{i,j} - \xi_{i,j}') ] = \sum_{j=1}^k (p_i - q_i)$. Thus, this choice of $k$ implies that $p_j = q_j$ for all $j<k$ and $p_k < q_k$. 
Therefore, we have that 
\begin{align}
 P_{\p,\q}(S_{2,k} = 0 < S_{2,k}') &= P_{\p,\q}\left( \sum_{j=1}^{k} \xi_{i,j} = 0 < \sum_{j=1}^{k} \xi_{i,j}' \right)   \nonumber     \\
&= P_{\p,\q}\left( \sum_{j=1}^{k-1} \xi_{i,j} = 0, \, \xi_{i,k} = 0, \, \xi_{i,k}'=1 \right) \nonumber \\
&= P_{\p,\q}\left( \sum_{j=1}^{k-1} \xi_{i,j}' = 0, \xi_{i,k}'=1 \right) - P_{\p,\q}\left( \sum_{j=1}^{k-1} \xi_{i,j} = 0, \xi_{i,k}=1 \right) \nonumber \\
&= (1-q_1)\cdots(1-q_{k-1})q_k - (1-p_1)\cdots(1-p_{k-1})p_k. \label{diff}% \\
%&= (1-p_1)\cdots(1-p_{k-1})(q_k - p_k). 
\end{align}
where in the second and third equalities above we are using that $\xi_{i,j} = \xi_{i,j}'$ for all $j < k$. Then since our choice of $k$ implies that $p_j = q_j$ for $j<k$ and $p_k < q_k$, \eqref{diff} is strictly positive. 
This completes the proof of \eqref{Sineq} and thus also the proof of Theorem \ref{Transmono}. 
%implies that \eqref{Sineq} holds. This completes the proof of Theorem \ref{Transmono}. 
\end{proof}

\section{Proof of Theorem \ref{Speedmono}} \label{sec:SM}

If $\d(\p) > 1$, then the excited random walk is transient to the right. Therefore, the total time spent to the left of the origin is finite. In particular, the total number of jumps to the left from the origin $D_0^\infty = \lim_{n\ra\infty} D_0^n $ is finite $P_{\p}$\,-a.s. Then, Lemma \ref{ZDlem} implies that $Z_n$ converges in distribution as $n\ra\infty$ and we will let $P_{\p}^\infty$ denote this limiting distribution. 
As was noted in \cite{bsCRWspeed}, since the backward branching process is an irreducible Markov chain (as can be easily seen from the definition) which converges in distribution, it follows that $Z_i$ is positive recurrent and that $P_{\p}^\infty$ is the unique invariant measure for the Markov chain $\{Z_i\}_{i\geq 0}$.  
Thus, an ergodic theorem for positive recurrent, irreducible Markov chains implies that  
\be\label{Zavg}
 \lim_{n\ra\infty} \frac{1}{n} \sum_{i=0}^{n-1} Z_i = E_{\p}^\infty[ Z_0 ], \qquad P_{\p}\text{-a.s.} 
\ee
Using this and the connection of the backwards branching process with the hitting times of the excited random walk, it is easy to obtain the following formula for the limiting speed of the excited random walk.

%Note that it follows that $Z_i$ is an irreducible, aperiodic, positive recurrent Markov chain.  
\begin{lem}[Corollary 2.3 in \cite{bsCRWspeed}]\label{vform}
 Suppose that $\d(\p) > 1$. Then, 
\[
 v(\p) := \lim_{n\ra\infty} X_n/n = \frac{1}{1 + 2 E_{\p}^\infty[ Z_0 ]}, \quad P_{\p}\text{-a.s.}
\]
\end{lem}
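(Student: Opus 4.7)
The plan is to combine the distributional identity \eqref{TZ} with the known fact that $X_n/n \to v(\p)$ almost surely, using the positive recurrence of the backward branching process $\{Z_i\}$ under $P_{\p}^\infty$.

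First I would relate the hitting time $T_n$ to its ``right-of-origin'' counterpart $\widetilde{T}_n$. Write $T_n = \widetilde{T}_n + L_n$ where $L_n = \#\{k < T_n : X_k < 0\}$ is the amount of time the walk spends strictly to the left of the origin before hitting $n$. Since $\d(\p) > 1$ the walk is transient to the right, so $L_\infty := \lim_{n\to\infty} L_n$ is finite $P_{\p}$-almost surely. Consequently $L_n/n \to 0$ almost surely, and thus $T_n/n$ and $\widetilde{T}_n/n$ share the same almost sure limit when it exists, and in particular the same distributional limits.

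Next I would analyze $\widetilde{T}_n/n$ via \eqref{TZ}. Dividing by $n$ gives
\[
 \frac{\widetilde{T}_n}{n} \;\overset{\text{law}}{=}\; 1 + \frac{2}{n}\sum_{i=0}^{n-1} Z_i + \frac{Z_n}{n}.
\]
Because $\{Z_i\}$ is an irreducible positive recurrent Markov chain with stationary distribution $P_{\p}^\infty$, the ergodic theorem gives $\frac{1}{n}\sum_{i=0}^{n-1} Z_i \to E_{\p}^\infty[Z_0]$ almost surely (with the understanding that the limit is $+\infty$ if $E_{\p}^\infty[Z_0]=\infty$); meanwhile $Z_n$ converges in distribution, hence $Z_n/n \to 0$ in probability. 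Therefore
\[
 \frac{\widetilde{T}_n}{n} \;\overset{\mathcal{D}}{\longrightarrow}\; 1 + 2E_{\p}^\infty[Z_0],
\]
and combining with the first step, $T_n/n \to 1 + 2E_{\p}^\infty[Z_0]$ in distribution.

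Finally, since Theorem \ref{rtspeed} gives $X_n/n \to v(\p)$ a.s., and on the event of transience to the right this transfers to $T_n/n \to 1/v(\p)$ a.s.\ (interpreting $1/0$ as $+\infty$ when $v(\p)=0$), the two limits must agree, yielding $v(\p) = 1/(1+2E_{\p}^\infty[Z_0])$. The main subtlety I expect is handling the boundary regime $\d(\p) \in (1,2]$, where $v(\p)=0$: there one must verify that the ergodic theorem still produces the correct (infinite) limit and that $L_n/n \to 0$, so that the identification $1/v(\p) = 1 + 2E_{\p}^\infty[Z_0] = \infty$ holds consistently; everything else is a straightforward consequence of \eqref{TZ} and the ergodic theory of $\{Z_i\}$.
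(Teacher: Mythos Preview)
Your proposal is correct and follows essentially the same route as the paper: both pass from $X_n/n\to v(\p)$ to $\widetilde{T}_n/n\to 1/v(\p)$ using transience to the right, and then identify this limit with $1+2E_{\p}^\infty[Z_0]$ via the distributional link to $\{Z_i\}$ and the ergodic theorem for positive recurrent chains. The only cosmetic difference is that the paper works with the representation $\widetilde{T}_n = n + 2\sum_{x=1}^n D_x^n + D_0^n$ and invokes Lemma~\ref{ZDlem} at the last step, whereas you invoke the packaged identity \eqref{TZ} up front and argue in distribution throughout; the content is the same.
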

\begin{rem}
 We have not assumed that $E_{\p}^\infty[ Z_0 ] < \infty$, and in fact this is only true if $\d(\p) > 2$. However, the formula above still holds even if $\d(\p) \in (1,2]$ since the limiting speed is $v(\p) = 0$ in that case \cite{bsCRWspeed,kzPNERW}. 
\end{rem}

We are now ready to give the proof of Theorem \ref{Speedmono}

\begin{proof}[Proof of Theorem \ref{Speedmono}]
Recall from Remark \ref{ineqrem} that $\p \prec \q$ implies that $\d(\p) \leq \d(\q)$.  We will assume that either $\d(\p)>2$ or $\d(\q) < -2$ since otherwise the conclusion of the Theorem \ref{Speedmono} is obvious from Theorem \ref{rtspeed}. Without loss of generality we will assume that $\d(\q) \geq \d(\p) > 2$ since the case $ \d(\p) \leq \d(\q) < -2$ can be handled by a symmetry argument. 
By the formula for $v(\p)$ in Lemma \ref{vform} it is enough to prove
\be\label{EZineq}
 \d(\p) > 2 \text{ and } \p \prec \q \quad \Longrightarrow \quad E_{\p}^\infty[Z_0] > E_{\q}^\infty[Z_0]. 
\ee
Note that since $\d(\q)\geq \d(\p) > 2$, the limiting speeds $v(\p),v(\q) > 0$ and thus the expectations $E_{\p}^\infty[Z_0]$ and $E_{\q}^\infty[Z_0]$ are both finite.

We will use the assumption $\p \prec \q$ to give a coupling of the backward branching processes in cookie environments $\p$ and $\q$, respectively. 
Let $(\Xi, \Xi')$ be the coupled families of Bernoulli random variables defined as in the proof of Theorem \ref{Transmono}. 
 Let, $F_{i,k}$ and $F_{i,k}'$ be the number of failures before the $k$-th success in the sequences of Bernoulli trials $\{\xi_{i,j}\}_{j\geq 1}$ and $\{\xi_{i,j}'\}_{j\geq 1}$, respectively, and let
\[
 Z_0 = Z_0' = 0, \quad\text{and}\quad Z_{i+1} = F_{i+1,Z_i + 1}, \quad  Z_{i+1}' = F_{i+1,Z_i' + 1}'.
\]
%By the construction of the Bernoulli families $(\Xi,\Xi')$, it is clear that $\p \prec \q$ implies that there exists an $m\leq M$ such that $P_{\p,\q}( F_{i,m} > F_{i,m}')$. 
The properties of the coupling $(\Xi,\Xi')$ imply that
$F_{i,k} \geq F_{i,k}'$ for all $i\in \Z$ and $k\geq 1$ and thus also that $Z_i \geq Z_i'$ for all $i\geq 0$. 

%Also, since 
%\[
%Z_i \geq Z_i' \quad \Longrightarrow \quad 
% Z_{i+1} = F_{i+1,Z_i+1} \geq F_{i+1,Z_i'+1} \geq F_{i+1,Z_i'+1}' = Z_{i+1}', 
%\]
%it follows by induction that $Z_i \geq Z_i'$ for all $i\geq 0$. 

Now, since $Z_i \geq Z_i' \geq 0$ and $Z_i$ is a positive recurrent Markov chain, it follows that $\{(Z_i,Z_i')\}$ is a positive recurrent Markov chain. Moreover, it is easy to see that this Markov chain is aperiodic and irreducible and therefore $(Z_n,Z_n')$ converges in distribution as $n\ra\infty$. We will denote this limiting distribution by $P_{\p,\q}^\infty$ (note that the marginal distributions of $P_{\p,\q}^\infty$ are necessarily $P_{\p}^\infty$ and $P_{\q}^\infty$). Thus, 
\be\label{avglb}
 \lim_{n\ra\infty} \frac{1}{n} \sum_{i=0}^{n-1} (Z_i - Z_i') \geq \lim_{n\ra\infty} \frac{1}{n} \sum_{i=0}^{n-1} \ind{Z_i > Z_i'} = P_{\p,\q}^\infty (Z_0 > Z_0'). 
\ee
Since $P_{\p,\q}^\infty$ is a stationary distribution we can calculate a lower bound on this probability by 
\be\label{Pinflb}
 P_{\p,\q}^\infty (Z_0 > Z_0') \geq P_{\p,\q}^\infty ( Z_0 = k-1, \, Z_1 > Z_1') \geq P_{\p}^\infty( Z_0 = k-1) P_{\p,\q}( F_{1,k} > F_{1,k}' ). 
\ee
In the last inequality we used that if $Z_0' \leq Z_0 = k-1$ then $Z_1 = F_{1,k}$ and $Z_1' \leq F_{1,k}'$.  
It follows from \eqref{xisum} in the coupling of $\Xi$ and $\Xi'$ that there exists a $k \leq M$ such that the last probability in \eqref{Pinflb} is non-zero.
%, and thus $P_{\p,\q}^{\infty}(Z_0 > Z_0')> 0$.  
Combining \eqref{avglb} and \eqref{Pinflb} we obtain
\[
  0 < P_{\p,\q}^\infty (Z_0 > Z_0') \leq \lim_{n\ra\infty} \frac{1}{n} \sum_{i=0}^{n-1} (Z_i - Z_i') = E_{\p}^\infty[Z_0] - E_{\q}^\infty[Z_0], 
\]
where the last equality follows from \eqref{Zavg} 
%an ergodic theorem for positive recurrent, irreducible Markov chains.
and the fact that $E_{\p}^\infty[Z_0], \, E_{\q}^\infty[Z_0] < \infty$ since we are only considering the case $\d(\p),\d(\q) > 2$. 
Thus \eqref{EZineq} holds, which completes the proof of Theorem \ref{Speedmono}. 
\end{proof}

\bibliographystyle{alpha}
\bibliography{CookieRW}

\end{document}